\newcommand{\newreptheorem}[2]{\newtheorem*{rep@#1}{\rep@title}\newenvironment{rep#1}[1]{\def\rep@title{#2 \ref{##1}}\begin{rep@#1}}{\end{rep@#1}}}
\newtheorem{thm}{Theorem}[section]
\newtheorem*{thm*}{Theorem}
\newtheorem{lem}[thm]{Lemma}
\newtheorem*{lem*}{Lemma}
\newtheorem{prop}[thm]{Proposition}
\newtheorem*{prop*}{Proposition}
\theoremstyle{definition}
\numberwithin{equation}{section}
\newcommand{\Ad}[1]{\text{Ad}_{#1}}
\newcommand{\SL}{\operatorname{SL}}
\newcommand{\GL}{\operatorname{GL}}
\newcommand{\Hom}{\operatorname{Hom}}
\newcommand{\sslash}{\mathbin{/\mkern-6mu/}}
\title[Deformations of reducible $\SL(n,\mathbb{C})$ representations]{
Deformations of reducible $\SL(n,\mathbb{C})$  representations of fibered 3-manifold groups}
\author{Kenji Kozai}
\address{Department of Mathematics,
Southern Connecticut State University,
501 Crescent Street, New Haven, CT 06515,
USA}
\email{kozaik1@southernct.edu}
\subjclass[2020]{57K31, 57K32, 57K35}
\begin{document}

\begin{abstract}
Let $M_\phi$ be a surface bundle over a circle with monodromy $\phi:S \rightarrow S$.
We study deformations of certain reducible representations of $\pi_1(M_\phi)$
into $\SL(n,\mathbb{C})$, obtained by composing a reducible representation into
$\SL(2,\mathbb{C})$ with the irreducible representation $\SL(2,\mathbb{C})
\rightarrow \SL(n,\mathbb{C})$. In particular, we show that under certain
conditions on the eigenvalues of $\phi^*$,
the reducible representation is contained in a $(n+1+k)(n-1)$ dimensional component
of the representation variety, where $k$ is the number of components of
$\partial M_\phi$. This result applies to mapping tori of pseudo-Anosov
maps with orientable invariant foliations whenever 1 is not an eigenvalue of the
induced map on homology, where the reducible representation is also a limit of
irreducible representations.
\end{abstract}

\maketitle

\section{Introduction}

Suppose that $S=S_{g,p}$ is a surface of genus $g$ with $p\geq 1$ punctures, where
$2g+p>2$. Then $S$ admits a hyperbolic structure. If $\phi:S \rightarrow S$ is a
homeomorphism, we can form the mapping torus $M_\phi = S \times [0,1] /
(x,1) \sim (\phi(x),0)$. Whenever $\lambda^2$ is an eigenvalue of $\phi^*:
H^1(S) \rightarrow H^1(S)$ with eigenvector $(a_1,\dots,a_{2g+p-1})^T$ with
respect to a generating set $\{[\gamma_1],\dots,[\gamma_{2g+p-1}]\}$ of
$H^1(S)$, we obtain a reducible representation $\rho_\lambda:\pi_1(M_\phi)
\rightarrow \SL(2,\mathbb{C})$ by defining,
\begin{align*}
	\rho_\lambda(\gamma_i) &= \begin{pmatrix} 1 & a_i\\0&1\end{pmatrix}\\
	\rho_\lambda(\tau) &=\begin{pmatrix} \lambda & 0\\0&\lambda^{-1} \end{pmatrix},
\end{align*}
where $\tau$ is the generator of the fundamental group of the $S^1$ base of the fiber bundle
$S \rightarrow M_\phi \rightarrow S^1$. (Recall that a representation $\rho:G \rightarrow
\GL(n,\mathbb{C})$ is \textit{reducible} if the image $\rho(G)$ preserves a proper
subspace of $\mathbb{C}^n$, and otherwise is called \textit{irreducible}.)

When $M_\phi$ is the complement of a knot $K$ in $S^3$, this
observation was originally made by Burde \cite{burde67} and de Rham
\cite{derham68}. Furthermore, the Alexander polynomial is the characteristic
polynomial of $\phi^*$, so the condition on $\lambda$ is equivalent to the
condition that $\lambda^2$ is a root of the Alexander polynomial
$\Delta_K(t)$. It was shown in \cite{heusener01} that the non-abelian, metabelian,
reducible representation $\rho_\lambda$ is
the limit of irreducible representations if $\lambda^2$ is a simple root of
$\Delta_K(t)$. Heusener and Medjerab \cite{heusener16} have also shown using
an inductive argument that
the conclusion still holds in $\SL(n,\mathbb{C})$, $n \geq 3$, if $\rho_\lambda$ is
composed with the irreducible representation $r_n:\SL(2,\mathbb{C})\rightarrow
\SL(n,\mathbb{C})$. These results apply even if the knot complement is not fibered,
as long as $\lambda^2$ is a simple root of $\Delta_K(t)$.

In this paper, we show that reducible $\SL(n,\mathbb{C})$ representations of
fibered 3-manifolds groups obtained as the composition $\rho_{\lambda,n} = r_n
\circ \rho_\lambda$ can be deformed to irreducible representations using a more
direct calculation of the deformation space using coordinates for
$\mathfrak{sl}(n,\mathbb{C})$. If
the punctures form a single orbit under $\phi$ and the mapping torus $M_\phi$ is the
complement of a fibered knot, then the results of \cite{heusener01} and
\cite{heusener16} apply. The main result in Theorem \ref{thm:smooth}
also covers the cases where $M_\phi$ is the complement of a fibered
link $L$ with $k\geq 2$ components $L_1,\dots, L_k$, or a
$k$-cusped fibered manifold which is not a link
complement. In the statement of Theorem \ref{thm:smooth}, $\bar{\phi}$ is the
homeomorphism on $\bar{S} = S_{g,0}$ obtained from $\phi$ by filling in the
$p$ punctures of $S_{g,p}$.
This defines a homeomorphism $\bar{\phi}: \bar{S}\rightarrow \bar{S}$.

\begin{thm}\label{thm:smooth}
	Suppose that $\lambda^2$ is a simple eigenvalue of $\phi^*$. If $|\lambda | \neq 1$,
	$\bar{\phi}^*:H^1(\bar{S}) \rightarrow H^1(\bar{S})$ does
	not have 1 as an eigenvalue, and if for each $2 \leq j \leq n-1$, we have that
	$\lambda^{2j}$ is not an eigenvalue of $\phi^*$, then $\rho_{\lambda,n}$ is a
	smooth point of the representation variety $R(\pi_1(M_\phi),\SL(n,\mathbb{C}))$,
	contained in a unique component of dimension $(n+1+k)(n-1)$.
\end{thm}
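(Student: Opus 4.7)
The plan is to apply the infinitesimal deformation theory of the representation variety at $\rho_{\lambda,n}$: compute the Zariski tangent space $H^1(\pi_1(M_\phi); \mathfrak{sl}(n,\mathbb{C})_{\Ad\rho_{\lambda,n}})$, show its dimension equals $(n+1+k)(n-1)$, and exhibit an honest family of representations through $\rho_{\lambda,n}$ of that same dimension containing irreducibles. Matching the Zariski tangent space dimension with the dimension of an honest family simultaneously delivers smoothness of $\rho_{\lambda,n}$, the fact that it lies in a single component of that dimension, and, once the family is chosen to contain irreducibles, the limit-of-irreducibles conclusion.

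For the cohomology computation I would decompose $\mathfrak{sl}(n,\mathbb{C}) \cong \bigoplus_{j=1}^{n-1} V_{2j}$ as $\SL(2,\mathbb{C})$-modules via $r_n$, where $V_{2j}$ denotes the $(2j{+}1)$-dimensional irreducible. Each $V_{2j}$ carries a filtration by weight spaces of the diagonal torus whose graded pieces are $1$-dimensional $\pi_1(M_\phi)$-modules on which $\tau$ acts by $\lambda^{2m}$, $-j \leq m \leq j$, and on which the fiber acts trivially in the graded sense. The Wang sequence for the fibration $S \to M_\phi \to S^1$,
\[
\cdots \to H^{i-1}(S; \mathbb{C}_\mu) \xrightarrow{\mu\,\phi^* - \mathrm{id}} H^{i-1}(S; \mathbb{C}_\mu) \to H^i(M_\phi; \mathbb{C}_\mu) \to H^i(S; \mathbb{C}_\mu) \xrightarrow{\mu\,\phi^*-\mathrm{id}} H^i(S; \mathbb{C}_\mu) \to \cdots,
\]
then computes the cohomology of each graded piece, and a filtration spectral sequence reassembles them into $H^*(M_\phi; V_{2j})$. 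The four hypotheses --- $\lambda^2$ simple, $|\lambda|\neq 1$, $1$ not an eigenvalue of $\bar{\phi}^*$, and $\lambda^{2j}$ not an eigenvalue of $\phi^*$ for $2\leq j\leq n$ --- are exactly the non-resonance conditions that force $\mu\,\phi^* - \mathrm{id}$ to be an isomorphism on every weight space whose kernel or cokernel would push $\dim H^1$ above the target value. The $k$ boundary tori then contribute an extra $(n-1)$ classes each via the long exact sequence of the pair $(M_\phi, \partial M_\phi)$ and Poincar\'e--Lefschetz duality, yielding the $+k(n-1)$ term in $(n+1+k)(n-1)$.

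To produce the honest family I would start with the one-parameter family of non-abelian $\SL(2,\mathbb{C})$ deformations of $\rho_\lambda$ guaranteed by simplicity of $\lambda^2$ (as in Burde--de~Rham and Heusener), compose with $r_n$, and enlarge it by peripheral deformations at the $k$ cusps together with deformations supported on the higher summands $V_{2j}$ predicted by the cohomology. Since irreducibility is an open condition, a single irreducible representation nearby suffices for the limit statement, and such a representation can be produced by an explicit matrix perturbation in a $V_{2j}$-direction with $j \geq 1$ that breaks the invariant flag of $\rho_{\lambda,n}$.

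The main technical obstacle is the obstruction/integration step. An element of $H^1$ integrates to a formal path of representations only if the cup-product obstructions in $H^2$ vanish at every order, the leading one being $u\cup u$. The non-resonance hypotheses on $\lambda^{2j}$ are precisely what force these obstructions to land in cohomology groups that the Wang-sequence computation shows are zero, but verifying this cleanly for \emph{mixed} cocycles with simultaneous components in several summands $V_{2j}$, and keeping proper track of the boundary contributions at the $k$ cusps, is where the bulk of the delicate work will be concentrated.
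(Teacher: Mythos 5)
Your overall architecture (Clebsch--Gordan decomposition $\mathfrak{sl}(n)\cong\oplus_{j=1}^{n-1}R_{2j}$, a cohomology computation reducing to non-resonance conditions on $\phi^*$, obstruction theory for integrability, and an explicit irreducible family obtained by pushing the $\SL(2)$ deformation through $r_n$) matches the paper's, but the step you yourself flag as the crux --- vanishing of the obstructions --- is resolved incorrectly. You propose that the non-resonance hypotheses force the obstruction classes to land in cohomology groups that vanish. They do not: since $\chi(M_\phi)=0$, $H^0(\Gamma_\phi;\mathfrak{sl}(n)_{\rho_{\lambda,n}})=0$ and $H^3=0$, the target computation $\dim H^1=k(n-1)$ forces $\dim H^2(\Gamma_\phi;\mathfrak{sl}(n)_{\rho_{\lambda,n}})=k(n-1)\neq 0$; likewise the cup products $u\cup u$ have components in summands such as $R_2$ whose $H^2$ is $k$-dimensional. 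No dimension count kills the obstructions. The argument that actually works is the boundary one: the restriction of $\rho_{\lambda,n}$ to each $\pi_1(T_i)$ contains a regular semisimple element, so each torus representation variety is smooth there and the obstruction restricts to zero on $\partial M_\phi$; then one shows $i^*:H^2(M_\phi)\to H^2(\partial M_\phi)$ is \emph{injective} via the half-lives-half-dies Poincar\'e duality argument (which is exactly where the equality $\dim H^1(M_\phi)=k(n-1)=\tfrac12\dim H^1(\partial M_\phi)$ is used). Relatedly, your plan to deduce smoothness by exhibiting an honest family of dimension $(n+1+k)(n-1)$ is circular as stated: producing honest deformations in the directions "predicted by the cohomology'' is precisely the integrability problem you have not solved.

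Two further points. First, your Wang-sequence/weight-filtration computation cannot treat every graded piece as acyclic: the weights $\mu=1$ and $\mu=\lambda^{\pm2}$ are genuinely resonant ($1$ is always an eigenvalue of the puncture permutation block of $\phi^*$, and $\lambda^{2}$ is an eigenvalue by hypothesis), so the spectral sequence has nontrivial differentials coupling these pieces; the paper does this coupling by hand via Fox calculus, showing the off-diagonal blocks cut the naive kernel dimension down to $k$ per copy of $R_2$. Second, for irreducibility a first-order flag-breaking perturbation does not suffice when $n>2$: for any family through $\rho_{\lambda,n}$ the corner entry satisfies $b_{n1}'(0)=0$, and the paper must compute an $(n-1)$-st order derivative of a determinant and invoke Burnside's theorem to conclude that $r_n\circ\rho_t$ is irreducible for all small $t\neq0$ (which is also what the "limit of irreducibles'' conclusion requires --- a single nearby irreducible plus openness is not enough).
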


Note that for a knot complement, the Alexander polynomial satisfies $\Delta_K(1)
= \pm 1$. Hence for a knot complement, the condition that
$\bar{\phi}^*:H^1(\bar{S}) \rightarrow H^1(\bar{S})$ does not have 1 as an eigenvalue
(in the fibered case) or the corresponding condition that $1$ is not a root of $\Delta_K(t)$
(in the non-fibered case)
is automatically satisfied. For a generic mapping torus, a fixed point of $\bar{\phi}^*$
implies that the closed manifold obtained as the mapping torus of $\bar{\phi}$ has
second Betti number at least 2, in which case the manifold fibers over a circle
in infinitely many ways \cite{thurston86}. Heuristically, this leads to more
infinitesimal deformations. When the local dimension of infinitesimal
dimensions is higher than half the dimension of $H^1(\partial M_\phi)$, the standard
techniques using Poincar\'{e} duality to show smoothness of the space of
representations cannot be used. Whether the reducible representation can be
obtained as a limit of irreducible representations in this case is unknown.

When $\phi$ is a
pseudo-Anosov element of the mapping class group, $\lambda^2$ is the dilatation
factor of $\phi$, and the $p$ punctures are exactly the singular points of the
invariant foliations of $\phi$, $\rho_\lambda=\rho_{\lambda,2}$ is shown to have
deformations to irreducible representations under some additional conditions on
the eigenvalues of $\bar{\phi}^*$, the map on the closed surface $S_g$, in
\cite{kozai16}. We show that under the same hypotheses, the same holds for
$\rho_{\lambda,n}$ when $n >2$.

\begin{thm}\label{thm:pseudoanosov}
	Suppose that $\lambda^2$ is the dilatation of a pseudo-Anosov map $\phi$ such
	that the stable and unstable foliations are orientable, and the singular points
	coincide with the punctures of $S$. Suppose also that
	$1$ is not an eigenvalue of $\bar{\phi}^*$. Then $\rho_{\lambda,n}$ is a limit of
	irreducible $\SL(n,\mathbb{C})$ representations  and is
	a smooth point of $R(\pi_1(M_\phi),\SL(n,\mathbb{C}))$, contained in a unique
	component of dimension $(n+1+k)(n-1)$.
\end{thm}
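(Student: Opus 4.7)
The plan is to derive Theorem \ref{thm:pseudoanosov} as a direct consequence of Theorem \ref{thm:smooth} by checking that each of the four eigenvalue hypotheses of the latter is satisfied under the pseudo-Anosov assumptions. Explicitly, I need: (i) $\lambda^2$ is a simple eigenvalue of $\phi^*$, (ii) $|\lambda|\neq 1$, (iii) $1$ is not an eigenvalue of $\bar\phi^*$, and (iv) $\lambda^{2j}$ is not an eigenvalue of $\phi^*$ for $2\leq j\leq n$.

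Conditions (ii) and (iii) require essentially no work: (ii) is automatic because the pseudo-Anosov dilatation satisfies $\lambda^2>1$, so in particular $|\lambda|>1$, and (iii) is one of the standing hypotheses of Theorem \ref{thm:pseudoanosov}. For (i), the standard Perron--Frobenius theory applied to the train-track transition matrix of $\phi$ shows that the transverse measure of the unstable foliation---which, thanks to orientability of the foliations and the coincidence of their singular points with the punctures of $S$, defines a class in $H^1(S;\mathbb{R})$---is an eigenvector of $\phi^*$ with eigenvalue $\lambda^2$, and simplicity follows from the simplicity of the Perron eigenvalue of the (irreducible, non-negative) transition matrix. This is exactly the input already exploited in the $n=2$ case treated in \cite{kozai14}, so I would cite that analysis directly.

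The substantive step is (iv), and here the key ingredient is the classical fact that, for a pseudo-Anosov map whose invariant foliations are orientable and whose singularities coincide with the punctures, the dilatation $\lambda^2$ equals the spectral radius of $\phi^*$ acting on $H^1(S;\mathbb{R})$. Granting this, for each $j\geq 2$ one has $|\lambda^{2j}|=|\lambda^2|^j>|\lambda^2|$ since $|\lambda^2|>1$, so $\lambda^{2j}$ strictly exceeds the spectral radius and therefore cannot be an eigenvalue of $\phi^*$; in particular, (iv) holds throughout the required range $2\leq j\leq n$. This spectral radius statement is the part I expect to be the main obstacle to pin down carefully, and I would cite it from the pseudo-Anosov literature (for instance, McMullen's work on the Teichm\"uller polynomial) rather than reproduce a proof.

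With (i)--(iv) in hand, Theorem \ref{thm:smooth} applies verbatim and delivers the conclusion: $\rho_{\lambda,n}$ is a smooth point of $R(\pi_1(M_\phi),\SL(n,\mathbb{C}))$ contained in a unique component of dimension $(n+1+k)(n-1)$, and is a limit of irreducible representations. This proves Theorem \ref{thm:pseudoanosov}.
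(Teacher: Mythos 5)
Your proof is correct and takes the same route as the paper, which derives Theorem \ref{thm:pseudoanosov} from Theorem \ref{thm:smooth} by invoking the well-known fact (citing \cite{fathi79,mcmullen03,penner91}) that for a pseudo-Anosov map with orientable invariant foliations whose singularities lie at the punctures, the dilatation $\lambda^2$ is the simple largest eigenvalue of $\phi^*$; since $\lambda^2 > 1$, every $\lambda^{2j}$ for $j \geq 2$ exceeds the spectral radius and so cannot be an eigenvalue. Your write-up simply spells out these eigenvalue checks more explicitly than the paper's brief remark.
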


In Section \ref{sec:slnc}, we give the basic definitions and background about
representations of $\SL(2,\mathbb{C})$ into $\SL(n,\mathbb{C})$. Section
\ref{sec:deformations} discusses the general theory of deformations,
and Section \ref{sec:results} contains the main results, including relevant
cohomological calculations.

\section{Representations into $\SL(n,\mathbb{C})$}\label{sec:slnc}

For notational convenience, we denote $\SL(n) = \SL(n,\mathbb{C})$, $\mathfrak{sl}(n)
= \mathfrak{sl}(n,\mathbb{C})$, $\GL(n) = \GL(n,\mathbb{C})$, and $\Gamma_\phi=
\pi_1(M_\phi)$. 
Note that we have the following identities in $\SL(2)$:
\begin{align}
	\begin{pmatrix} 1 & a \\ 0 & 1 \end{pmatrix}
	\begin{pmatrix} 1 & b \\ 0 & 1 \end{pmatrix}
	&= \begin{pmatrix} 1 & a+b \\ 0 & 1 \end{pmatrix}, \label{eqn:translation}\\
	\begin{pmatrix} \lambda & 0 \\ 0 & \lambda^{-1} \end{pmatrix}
	\begin{pmatrix} 1 & a \\ 0 & 1 \end{pmatrix}
	\begin{pmatrix} \lambda & 0 \\ 0 & \lambda^{-1} \end{pmatrix}^{-1}
	&= \begin{pmatrix} 1 & \lambda^2 a \\ 0 & 1 \end{pmatrix}.\notag
\end{align}
Thus, if $\lambda^2$ is an eigenvalue of $\phi^*:H^1(S) \rightarrow H^1(S)$,
$\{[\gamma_1],\dots,[\gamma_{2g+p-1}]\}$ generate $H^1(S)$, and
$(a_1,\dots,a_{2g+p-1})^T$ is an eigenvector for $\lambda^2$, we can define
\begin{align*}
	\rho_\lambda(\gamma_i) &= \begin{pmatrix} 1 & a_i\\0&1\end{pmatrix}\\
	\rho_\lambda(\tau) &=\begin{pmatrix} \lambda & 0\\0&\lambda^{-1} \end{pmatrix}.
\end{align*}
Since $\pi_1(\Gamma_\phi)$ is a semi-direct product of the free group $\pi_1(S)=
\langle \gamma_1,\dots,\gamma_{2g+p-1}\rangle$ with $\pi_1(S^1)=\langle\tau\rangle$
satisfying the relations $\tau\gamma_i\tau^{-1}=\phi(\gamma_i)$ and $\phi^*$ maps the
vector $(a_1,\dots,a_{2g+p-1})^T$ to $ \lambda^2 (a_1,\dots,a_{2g+p-1})^T$, the identities 
\eqref{eqn:translation} imply that this defines a representation
$\rho_\lambda: \Gamma_\phi \rightarrow \SL(2)$.

We now describe representations of $\SL(2)$ into $\SL(n)$, which we will compose
with $\rho_\gamma$ to obtain representations $\Gamma_\phi \rightarrow \SL(n)$.
A more general version of the discussion in this section can be found in
\cite[Section 4]{heusener16}.

Let $R = \mathbb{C}[X,Y]$ be the polynomial algebra on two variables. We have an
action of $\SL(2)$ on $R$ by,
\begin{align*}
	\begin{pmatrix} a & b\\c&d\end{pmatrix} \cdot X &= dX - bY\\
	\begin{pmatrix} a& b\\c&d\end{pmatrix} \cdot Y & = -cX + aY,
\end{align*}
for $\begin{psmallmatrix}a & b\\c&d\end{psmallmatrix} \in \SL(2)$. Let
$R_{n-1} \subset R$ denote the $n$-dimensional subspace of homogenous
polynomials of degree $n-1$, generated by $X^{\ell-1}Y^{n-\ell},
1 \leq \ell \leq n$.
The action of $\SL(2)$ leaves $R_{n-1}$ invariant, turning $R_{n-1}$ into a
$\SL(2)$ module, and we obtain a representation $r_n: \SL(2) \rightarrow
\GL(R_{n-1})$. We can identify $R_{n-1}$ with $\mathbb{C}^n$ by identifying
the basis elements $\{X^{\ell-1}Y^{n-\ell}\}$ with the standard basis elements
$\{e_\ell\}$ of $\mathbb{C}^n$. The induced isomorphism turns $r_n$ into a
representation $\SL(2) \rightarrow \GL(n) \cong \GL(R_{n-1})$,
which we will also call $r_n$. The representation $r_n$ is \textit{rational},
that is the coefficients of the matrix coordinates of
$r_n\begin{psmallmatrix}a&b\\c&d\end{psmallmatrix}$ are polynomials in
$a,b,c,d$.

We have the following two well-known results about $r_n$.

\begin{lem}\cite[Lemma 3.1.3(ii)]{springer77}\label{lem:rn_irreducible}
	The representation $r_n$ is irreducible.
\end{lem}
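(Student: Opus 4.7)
The plan is to exploit the weight decomposition under the diagonal torus and then use unipotent elements to connect weight spaces. First I would compute the action of the torus $T = \{\mathrm{diag}(t,t^{-1})\}$: from the given formulas, $\mathrm{diag}(t,t^{-1})$ sends $X \mapsto t^{-1}X$ and $Y \mapsto tY$, so the basis vector $X^{l-1}Y^{n-l}$ is a $T$-eigenvector with eigenvalue $t^{n-2l+1}$. The exponents $n-2l+1$ for $l=1,\dots,n$ are pairwise distinct, so $R_{n-1}$ decomposes into $n$ one-dimensional $T$-weight spaces, each spanned by a single basis monomial.

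Next, let $V \subseteq R_{n-1}$ be a nonzero $\SL(2)$-invariant subspace. Because $V$ is in particular $T$-invariant and each weight space is one-dimensional, $V$ must be the span of some nonempty subset $\{X^{l-1}Y^{n-l} : l \in I\}$ for a subset $I \subseteq \{1,\dots,n\}$. The goal is to show $I = \{1,\dots,n\}$.

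Pick any $l_0 \in I$. Applying $u_b = \begin{pmatrix} 1 & b \\ 0 & 1 \end{pmatrix}$ to $X^{l_0-1}Y^{n-l_0}$ yields $(X - bY)^{l_0-1}Y^{n-l_0}$, and expanding this as a polynomial in $b$ and using that $V$ contains all these vectors for every $b \in \mathbb{C}$ shows, via a Vandermonde-style argument on the coefficients, that each monomial $X^{l_0-1-j}Y^{n-l_0+j}$ for $0 \le j \le l_0-1$ lies in $V$. Symmetrically, applying the lower unipotent $\begin{pmatrix} 1 & 0 \\ c & 1 \end{pmatrix}$ produces $X^{l_0-1}(-cX+Y)^{n-l_0}$, placing $X^{l_0-1+j}Y^{n-l_0-j}$ in $V$ for $0 \le j \le n-l_0$. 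Together these exhaust all $n$ basis monomials, so $V = R_{n-1}$.

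There is essentially no obstacle here beyond organizing the unipotent computation; this is simply the classical fact that the symmetric-power representations of $\SL(2)$ are the irreducible representations. One could alternatively invoke general highest-weight theory, but the elementary argument above is self-contained and fits the concrete setup the paper uses.
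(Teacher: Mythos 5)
The paper does not give its own proof of this lemma; it cites it as a standard fact from Springer's text (\cite[Lemma~3.1.3(ii)]{springer77}), so there is no internal argument to compare against. That said, your argument is correct and is essentially the classical proof. Checking the details against the paper's conventions: with the action $\begin{psmallmatrix}a&b\\c&d\end{psmallmatrix}\cdot X = dX - bY$, $\begin{psmallmatrix}a&b\\c&d\end{psmallmatrix}\cdot Y = -cX + aY$, the diagonal element $\mathrm{diag}(t,t^{-1})$ indeed sends $X\mapsto t^{-1}X$, $Y\mapsto tY$, so $X^{l-1}Y^{n-l}$ has weight $t^{n-2l+1}$; these exponents are distinct, so each weight space is one-dimensional and any invariant subspace $V$ is spanned by a subset of the monomials. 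Your Vandermonde step — observing that $(X-bY)^{l_0-1}Y^{n-l_0}\in V$ for all $b\in\mathbb{C}$ forces every coefficient monomial into $V$ — is valid and, together with the symmetric lower-unipotent computation, connects any one monomial to all $n$ of them, giving $V=R_{n-1}$. This is the group-level version of the usual highest-weight argument: your polynomial-in-$b$ trick plays the role that differentiating at $b=0$ (i.e., the raising/lowering operators $e,f$ in $\mathfrak{sl}(2)$) plays in the infinitesimal version, and either route is standard for this result. No gaps.
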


\begin{lem}\cite[Lemma 3.2.1]{springer77}\label{lem:rn_unique}
	Any irreducible rational representation of $\SL(2,\mathbb{C})$ is conjugate to some
	$r_n$.
\end{lem}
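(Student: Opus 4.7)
The plan is to identify any irreducible rational representation $\rho:\SL(2)\to \GL(V)$ by its highest weight. The rationality of $\rho$ implies that we may differentiate it to obtain a representation $d\rho:\mathfrak{sl}(2)\to\mathfrak{gl}(V)$, and because $\SL(2)$ is connected, $d\rho$ is irreducible whenever $\rho$ is. The classification will then come directly from the standard theory of finite-dimensional $\mathfrak{sl}(2)$-modules.

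First I would set up the usual basis $H=\begin{psmallmatrix}1&0\\0&-1\end{psmallmatrix}$, $X=\begin{psmallmatrix}0&1\\0&0\end{psmallmatrix}$, $Y=\begin{psmallmatrix}0&0\\1&0\end{psmallmatrix}$ of $\mathfrak{sl}(2)$, with relations $[H,X]=2X$, $[H,Y]=-2Y$, $[X,Y]=H$. Since $V$ is finite-dimensional, $d\rho(H)$ has an eigenvalue $\mu$ whose real part is maximal; for a corresponding eigenvector $v$, the commutation relation forces $d\rho(X)v=0$ (else it would be an eigenvector with larger eigenvalue $\mu+2$). An inductive calculation then shows that $d\rho(H)Y^kv=(\mu-2k)Y^kv$ and $d\rho(X)Y^kv=k(\mu-k+1)Y^{k-1}v$, where I abbreviate $d\rho(Y)=Y$. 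Finite-dimensionality forces $Y^n v=0$ for a minimal $n\geq 1$, and this forces $\mu=n-1$, a non-negative integer. The span $W$ of $v,Yv,\ldots,Y^{n-1}v$ is then an $\mathfrak{sl}(2)$-invariant subspace of dimension $n$, and by irreducibility of $d\rho$ we conclude $V=W$.

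The second step is to verify that $r_n$ itself has the same highest weight $n-1$. A direct differentiation of the $\SL(2)$-action on $R_{n-1}$ shows that $X^{n-1}\in R_{n-1}$ satisfies $dr_n(H)\cdot X^{n-1}=(n-1)X^{n-1}$ and $dr_n(X)\cdot X^{n-1}=0$, and (together with Lemma~\ref{lem:rn_irreducible}) identifies $dr_n$ as an irreducible $\mathfrak{sl}(2)$-module of dimension $n$ with highest weight $n-1$. By the uniqueness of the highest-weight module, $d\rho$ and $dr_n$ are isomorphic as $\mathfrak{sl}(2)$-representations; concretely, mapping $v\mapsto X^{n-1}$ and extending by the action of $Y$ yields an explicit intertwiner $T:V\to R_{n-1}$.

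Finally, I would promote $T$ from an $\mathfrak{sl}(2)$-intertwiner to a conjugation of group representations. Because $\SL(2)$ is connected and any two such representations agree after exponentiation on a neighborhood of the identity (and both are determined by their derivatives on that neighborhood), the identity $T\circ d\rho(\xi)=dr_n(\xi)\circ T$ for $\xi\in\mathfrak{sl}(2)$ implies $T\rho(g)T^{-1}=r_n(g)$ for all $g\in\SL(2)$. After identifying $R_{n-1}\cong \mathbb{C}^n$, the matrix $T$ provides the desired conjugation. The main obstacle is the passage from Lie algebra isomorphism to group isomorphism; this is where connectedness of $\SL(2)$ is essential, and where rationality of $\rho$ (ensuring that $\rho$ is determined by $d\rho$ as an algebraic morphism) does the work.
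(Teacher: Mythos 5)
The paper itself offers no proof of this lemma; it is quoted verbatim as Lemma~3.2.1 from Springer's \emph{Invariant Theory} \cite{springer77}, and the manuscript simply relies on that citation. Your proposal, by contrast, supplies a self-contained argument via the highest-weight classification of finite-dimensional $\mathfrak{sl}(2)$-modules: differentiate $\rho$ to get $d\rho$ (using rationality for smoothness and connectedness of $\SL(2,\mathbb{C})$ to preserve irreducibility), run the standard raising/lowering-operator calculation to show $d\rho$ has highest weight $n-1$ where $n=\dim V$, match this against $dr_n$, and then integrate the intertwiner back to the group using connectedness (and, implicitly, simple connectedness of $\SL(2,\mathbb{C})$, though connectedness alone suffices once both sides are honest group representations). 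This is the textbook Lie-theoretic route and it is correct in substance; Springer's own treatment is more algebraic-group-theoretic, working with rational characters of the maximal torus and Borel subgroup rather than passing to the Lie algebra, which avoids any appeal to the exponential map but is less elementary to most readers. Your version buys transparency; Springer's buys a framework that generalizes cleanly to other rational representations of reductive groups in arbitrary characteristic.

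One small slip worth flagging: with the specific $\SL(2)$-action on $R=\mathbb{C}[X,Y]$ defined in Section~\ref{sec:slnc} of the paper (namely $\begin{psmallmatrix}a&b\\c&d\end{psmallmatrix}\cdot X = dX-bY$, $\begin{psmallmatrix}a&b\\c&d\end{psmallmatrix}\cdot Y = -cX+aY$), the diagonal element $\begin{psmallmatrix}a&0\\0&a^{-1}\end{psmallmatrix}$ sends $X^{l-1}Y^{n-l}$ to $a^{n-2l+1}X^{l-1}Y^{n-l}$, so the highest-weight vector (weight $n-1$, annihilated by $dr_n\begin{psmallmatrix}0&1\\0&0\end{psmallmatrix}$) is $Y^{n-1}$, not $X^{n-1}$; one checks directly that $\begin{psmallmatrix}1&b\\0&1\end{psmallmatrix}$ fixes $Y$ but moves $X$. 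The correction is cosmetic and does not affect the conclusion, since the only thing actually used is that $r_n$ is the $n$-dimensional irreducible, hence of highest weight $n-1$, but you should swap the named vector to keep the computation honest under the paper's convention.
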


It is easy to check that $r_n$ maps the unipotent matrices
$\begin{psmallmatrix}1&b\\0&1\end{psmallmatrix}$
and $\begin{psmallmatrix}1&0\\c&1\end{psmallmatrix}$ to unipotent elements of
$\SL(R_{n-1})$, and the diagonal element
$\begin{psmallmatrix}a & 0 \\ 0 & a^{-1}\end{psmallmatrix}$ is mapped to the
diagonal element $\text{diag}(a^{n-1},a^{n-3},\dots,a^{-n+1})$. Since these
elements generate $\SL(2)$, the image of $r_n$ lies in
$\SL(R_{n-1}) \cong \SL(n)$.

We now define $\rho_{\lambda,n} = r_n \circ \rho_\lambda$. As we will only be considering
the case when $\lambda^2$ is a simple eigenvalue of $\phi^*$ and the above lemmas
imply the uniqueness of $r_n$, this gives a well-defined and unique (up to conjugation)
representation $\rho_{\lambda,n}:\Gamma_\phi \rightarrow \SL(n)$.

By composing $\rho_{\lambda,n}$
with the adjoint representation, we also obtain an action of $\Gamma_\phi$ on 
$\mathfrak{sl}(n)$, turning it into a $\Gamma_\phi$ module. The following
decomposition is a consequence of the Clebsch-Gordan formula (see,
for example, \cite[Lemma 1.4]{menalferrer12-1}).

\begin{lem}\label{lem:decomposition}
	With the $\Gamma_\phi$ module structure,
	$\mathfrak{sl}(n) \cong \oplus_{j=1}^{n-1} R_{2j}$.
\end{lem}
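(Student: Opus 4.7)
The plan is to reduce the statement to a pure $\SL(2)$-representation-theoretic fact. The $\Gamma_\phi$-action on $\mathfrak{sl}(n)=\mathfrak{sl}(R_{n-1})$ is given by $\gamma\mapsto \mathrm{Ad}_{\rho_{\lambda,n}(\gamma)}=\mathrm{Ad}_{r_n(\rho_\lambda(\gamma))}$, so it factors through $\rho_\lambda:\Gamma_\phi\to\SL(2)$ followed by the adjoint action of $\SL(2)$ on $\mathfrak{sl}(R_{n-1})$ induced by $r_n$. Consequently, any decomposition of $\mathfrak{sl}(R_{n-1})$ into irreducible $\SL(2)$-submodules is automatically $\Gamma_\phi$-equivariant, and it suffices to exhibit an $\SL(2)$-isomorphism $\mathfrak{sl}(n)\cong\bigoplus_{j=1}^{n-1}R_{2j}$.

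To produce such an isomorphism, I would realize $\mathfrak{gl}(n)=\mathrm{End}(R_{n-1})\cong R_{n-1}\otimes R_{n-1}^{*}$ as $\SL(2)$-modules. By Lemma~\ref{lem:rn_irreducible} and Lemma~\ref{lem:rn_unique}, the dual $R_{n-1}^{*}$ is an irreducible rational representation of the same dimension as $R_{n-1}$, hence isomorphic to $R_{n-1}$ itself. Thus $\mathrm{End}(R_{n-1})\cong R_{n-1}\otimes R_{n-1}$, and the Clebsch--Gordan formula yields
\begin{equation*}
R_{n-1}\otimes R_{n-1}\;\cong\;\bigoplus_{j=0}^{n-1}R_{2j}.
\end{equation*}
The unique trivial summand $R_{0}$ is spanned by the identity endomorphism, and the splitting $\mathfrak{gl}(n)=\mathfrak{sl}(n)\oplus\mathbb{C}\cdot I$ is $\SL(2)$-equivariant (projection onto $\mathbb{C}\cdot I$ being, up to scalar, the trace). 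Removing this copy of $R_{0}$ gives the desired isomorphism $\mathfrak{sl}(n)\cong\bigoplus_{j=1}^{n-1}R_{2j}$.

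There is no serious obstacle in this argument: once one observes that the $\Gamma_\phi$-action factors through $r_n$, the whole decomposition reduces to Clebsch--Gordan together with the trace splitting. The only points requiring care are the self-duality of $R_{n-1}$, keeping the subscript convention $R_{j}=\dim(j+1)$ consistent with the Clebsch--Gordan indexing, and verifying that the trivial summand $R_{0}$ appears with multiplicity exactly one so that its removal unambiguously leaves $\mathfrak{sl}(n)$.
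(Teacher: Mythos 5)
Your proof is correct and follows essentially the same route the paper invokes: the paper gives no inline proof, instead citing the Clebsch--Gordan formula (and a lemma of Menal-Ferrer and Porti), and your argument is precisely the standard Clebsch--Gordan computation that citation refers to, with the factorization through $\rho_\lambda$, self-duality of $R_{n-1}$, and the trace splitting correctly supplying the details the paper leaves implicit.
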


This decomposition will be used to calculate the infinitesimal deformations
of $\rho_{\lambda,n}$.

\section{Infinitesimal deformations}\label{sec:deformations}

In this section, let $M$ be a 3-manifold with finitely many torus boundary
components $\partial M =  \sqcup_{i=1}^k T_i$ and $\Gamma=\pi_1(M)$.
For each boundary torus $T_i$, the inclusion map $\iota: T_i \rightarrow M$
induces a map from $\pi_1(T_i)$ to a conjugacy class of subgroups isomorphic
to $\pi_1(T_i) \cong \mathbb{Z} \times \mathbb{Z}$ in $\pi_1(M)$. To each boundary
component $T_i$, we associate $\pi_1(T_i)$ with a representative subgroup $\Delta_i$
in $\Gamma$.
Let $R(\Gamma,\SL(n))=\Hom(\Gamma,\SL(n))$ be the variety of
representations of $\Gamma$ into $\SL(n)$ and $X(\Gamma,\SL(n)) =
R(\Gamma,\SL(n))\sslash \SL(n)$ be the $\SL(n)$ character variety, where the
quotient is the GIT quotient as $\SL(n)$ acts by conjugation.

Suppose $\rho:\Gamma\rightarrow \SL(n)$ is a representation.
The group of twisted cocycles  $Z^1(\Gamma;\mathfrak{sl}(n)_\rho)$
is defined as the set of maps $z:\Gamma \rightarrow \mathfrak{sl}(n)$ that satisfy the
twisted cocycle condition
\begin{equation}
	z(ab) = z(a) + \Ad{\rho(a)} z(b),\label{eqn:cocycle}
\end{equation}
which can be interpreted as the derivative of the homomorphism condition for
a smooth family of representation $\rho_t$ at $\rho$.
The derivative of the triviality condition that $\rho_t$ is a smooth family of
representations obtained by conjugating $\rho$ gives the coboundary condition,
\begin{equation}
	z(\gamma) = u - \Ad{\rho(\gamma)}u, \label{eqn:coboundary}
\end{equation}
and $B^1(\Gamma;\mathfrak{sl}(n)_\rho)$ is defined as the
set of coboundaries, or the cocycles
satisfying Equation \eqref{eqn:coboundary}. The quotient is defined to be
\begin{equation*}
	H^1(\Gamma;\mathfrak{sl}(n)_\rho) = Z^1(\Gamma;\mathfrak{sl}(n)_\rho) /
		B^1(\Gamma;\mathfrak{sl}(n)_\rho).
\end{equation*}
Weil \cite{weil64, lubotzky85} has  noted that $Z^1(\Gamma;\mathfrak{sl}(n)_\rho)$
contains the tangent space to $R(\Gamma,\SL(n))$ at $\rho$ as a subspace.
The following tools can be used to determine if the representation variety
is smooth at $\rho$ so that we can study the space of cocycles to
determine the first order behavior of deformations of a representation $\rho$.
In the following proposition, $C^1(\Gamma;\mathfrak{sl}(n)_\rho)$ denotes the
set of cochains $\{c:\Gamma \rightarrow \mathfrak{sl}(n)\}$.

\begin{prop}[\cite{heusener16}, Lemma 3.2; \cite{heusener01}, Proposition 3.1]
	Let $\rho \in R(\Gamma,\SL(n))$, $u_i \in C^1(\Gamma;
	\mathfrak{sl}(n)_\rho)$, $ 1 \leq i \leq j$ be given, and $\mathbb{C}[[t]]$
	denote the set of formal power series in $t$ with coefficients in
	$\mathbb{C}$. If
	\begin{equation*}
		\rho^j(\gamma) = \exp(\sum_{i=1}^j t^i u_i(\gamma))\rho(\gamma)
	\end{equation*}
	is a homomorphism into $\SL(n,\mathbb{C}[[t]])$ modulo
	$t^{j+1}$, then there exists an obstruction class $\zeta_{j+1}^{(u_1,\dots,u_k)} \in
	H^2(\Gamma;\mathfrak{sl}(n)_\rho)$ such that:
	\begin{enumerate}
		\item There is a cochain $u_{j+1}:\Gamma \rightarrow
			\mathfrak{sl}(n)$ such that
			\begin{equation*}
				\rho^{j+1}(\gamma) = \exp(\sum_{i=1}^{j+1} t^i u_i(\gamma)) 
					\rho(\gamma)
			\end{equation*}
			is a homomorphism modulo $t^{j+2}$ if and only if $\zeta_{j+1}=0$.
		\item The obstruction $\zeta_{j+1}$ is natural, i.e. if $f$ is a homomorphism
			then $f^* \rho^j := \rho^j \circ f$ is also a homomorphism modulo $t^{j+1}$
			and $f^*(\zeta_{j+1}^{(u_1,\dots,u_j)}) = \zeta_{j+1}^{(f^* u_1,\dots,f^* u_j)}$.
	\end{enumerate}
\end{prop}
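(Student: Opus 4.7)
The plan is standard deformation-theoretic obstruction theory applied over the truncated formal power series ring. Write $U^j(\gamma)=\sum_{i=1}^{j} t^i u_i(\gamma)$ so that $\rho^j(\gamma)=\exp(U^j(\gamma))\rho(\gamma)$. Using $\rho(a)\exp(X)=\exp(\Ad{\rho(a)}X)\rho(a)$ together with the Baker--Campbell--Hausdorff formula, the homomorphism condition $\rho^j(ab)\equiv \rho^j(a)\rho^j(b)$ modulo $t^{j+1}$ is equivalent to the identity
\begin{equation*}
    U^j(ab) \equiv U^j(a) + \Ad{\rho(a)}U^j(b) + \tfrac{1}{2}\bigl[U^j(a),\,\Ad{\rho(a)}U^j(b)\bigr] + \cdots \pmod{t^{j+1}}
\end{equation*}
in $\mathfrak{sl}(n)\otimes \mathbb{C}[[t]]$, where the omitted terms are the higher nested brackets of the BCH series.

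First I would make the ansatz $\rho^{j+1}(\gamma)=\exp\bigl(U^j(\gamma)+t^{j+1}u_{j+1}(\gamma)\bigr)\rho(\gamma)$ and extract the coefficient of $t^{j+1}$ in the corresponding BCH identity. Because each $u_i$ is multiplied by $t^i$ and every bracket in BCH involves at least two factors, the new cochain $u_{j+1}$ enters the order-$(j+1)$ equation only through the linear combination $u_{j+1}(ab) - u_{j+1}(a) - \Ad{\rho(a)}u_{j+1}(b)$, i.e. through $-\delta u_{j+1}(a,b)$, where $\delta$ denotes the group coboundary for the $\Gamma$-module $\mathfrak{sl}(n)_\rho$. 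All remaining terms assemble into a $2$-cochain $\zeta(a,b)\in C^2(\Gamma;\mathfrak{sl}(n)_\rho)$ depending only on $u_1,\dots,u_j$ and $\rho$. The order-$(j+1)$ equation then reads $\delta u_{j+1}=\zeta$, so a suitable $u_{j+1}$ exists precisely when $\zeta$ is a coboundary.

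Next I would show that $\zeta$ is automatically a $2$-cocycle and set $\zeta_{j+1}^{(u_1,\dots,u_j)}:=[\zeta]\in H^2(\Gamma;\mathfrak{sl}(n)_\rho)$. The cocycle identity $\delta\zeta=0$ comes from associativity: expanding the two ways of forming $\rho^j(a)\rho^j(b)\rho^j(c)$ modulo $t^{j+2}$ and equating the coefficients of $t^{j+1}$ yields exactly
\begin{equation*}
    \zeta(ab,c)+\Ad{\rho(a)}\zeta(b,c)=\zeta(a,bc)+\zeta(a,b).
\end{equation*}
Naturality under a homomorphism $f:\Gamma'\to\Gamma$ is then visible already at the cochain level, because the same construction applied to $\rho^j\circ f$ is driven by the pulled-back cochains $f^*u_1,\dots,f^*u_j$ and the adjoint action is transported by $f$; passing to cohomology gives $f^*(\zeta_{j+1}^{(u_1,\dots,u_j)})=\zeta_{j+1}^{(f^*u_1,\dots,f^*u_j)}$.

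The main obstacle I expect is the BCH bookkeeping: one must check carefully that the $O(t^{j+1})$ contribution from the nested brackets actually assembles into an $\Ad{\rho}$-twisted $2$-cochain, independent of the still-undetermined $u_{j+1}$, and that the resulting cocycle identity is forced by associativity alone rather than by some stronger constraint on the $u_i$. Once the correct formula for $\zeta$ is in hand, the remainder of the argument is formal group cohomology, and both (1) and (2) follow immediately from the equation $\delta u_{j+1}=\zeta$ and the naturality of the construction.
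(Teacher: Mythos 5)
Your outline is correct and matches the standard Gerstenhaber--Goldman style obstruction argument that the cited sources (\cite{heusener01}, Prop.\ 3.1 and \cite{heusener14}, Lemma 3.2) use; the paper itself states this Proposition as a citation and does not reprove it, so there is no in-paper proof to compare against. One small simplification worth knowing: you can avoid most of the BCH bookkeeping by working directly with the defect $c(a,b)=\rho^j(a)\rho^j(b)\rho^j(ab)^{-1}=I+t^{j+1}\zeta(a,b)+O(t^{j+2})$, since $\rho^j$ being a homomorphism modulo $t^{j+1}$ already forces $c\equiv I$ to that order; then the two ways of bracketing $\rho^j(a)\rho^j(b)\rho^j(c)$ give $c(a,b)\,c(ab,c)=\bigl(\rho^j(a)c(b,c)\rho^j(a)^{-1}\bigr)c(a,bc)$, whose $t^{j+1}$-coefficient is exactly the twisted $2$-cocycle identity, and adding $t^{j+1}u_{j+1}$ inside the exponential shifts $\zeta$ by $\pm\delta u_{j+1}$ because any BCH bracket containing $u_{j+1}$ is already $O(t^{j+2})$. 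Your version with the explicit BCH expansion reaches the same equations, just with more terms to track.
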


We will apply the previous proposition to the restriction map $\iota^*$ on cohomology, which
is induced by the inclusion map $\iota:\partial M \rightarrow M$. As
$\partial M$ consists of a disjoint union of tori, we will need to understand
$H^1(\Delta_i;\mathfrak{sl}(n)_{r_n\circ\rho})$. Recall that a \textit{hyperbolic} element
of $\SL(2)$ is an element that acts on $\mathbb{H}^3$ with no fixed points in
$\mathbb{H}^3$ and two fixed points on $\partial \mathbb{H}^3$. Such elements are
characterized by being conjugate in $\SL(2)$ to a diagonal matrix with distinct eigenvalues
that are not on the unit circle.

\begin{lem}\label{lem:torus_dimension}
	Suppose $\rho:\mathbb{Z} \times \mathbb{Z} \rightarrow \SL(2)$ contains a hyperbolic
	element in its image. Then
	$\dim H^1(\mathbb{Z} \times \mathbb{Z};\mathfrak{sl}(n)_{r_n\circ \rho}) = 2(n-1)$.
\end{lem}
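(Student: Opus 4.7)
The plan is to decompose the coefficient module and reduce to a computation of $H^1(\mathbb{Z}^2; V)$ for one-dimensional coefficients. By Lemma~\ref{lem:decomposition},
\begin{equation*}
H^1(\pi_1(T^2); \mathfrak{sl}(n)_{r_n \circ \rho}) \cong \bigoplus_{j=1}^{n-1} H^1(\pi_1(T^2); R_{2j}),
\end{equation*}
so it suffices to show that each summand has dimension $2$.

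First I would diagonalize $\rho$. Because $\pi_1(T^2) \cong \mathbb{Z}^2$ is abelian and its image contains an element with two distinct eigenvalues, the entire image is simultaneously diagonalizable. Choose generators $\alpha, \beta$ of $\pi_1(T^2)$, conjugate $\rho$ so that $\rho(\alpha) = \text{diag}(a, a^{-1})$ and $\rho(\beta) = \text{diag}(b, b^{-1})$, and write the given hyperbolic element as $\rho(\alpha^p \beta^q) = \text{diag}(\lambda, \lambda^{-1})$, so that $\lambda = a^p b^q$ is not a root of unity. Each $R_{2j}$ then splits into one-dimensional weight spaces $R_{2j} = \bigoplus_\mu V_\mu$ indexed by $\mu \in \{-2j, -2j+2, \ldots, 2j\}$, with $\alpha$ acting on $V_\mu$ by the scalar $a^\mu$ and $\beta$ by the scalar $b^\mu$.

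Next I would compute $H^1(\mathbb{Z}^2; V_\mu)$ from the standard two-generator cochain complex $V_\mu \xrightarrow{d^0} V_\mu^2 \xrightarrow{d^1} V_\mu$ with $d^0(v) = ((a^\mu - 1)v, (b^\mu - 1)v)$ and $d^1(u, w) = (b^\mu - 1)u - (a^\mu - 1)w$. A short check gives $H^1 = \mathbb{C}^2$ when $a^\mu = b^\mu = 1$ (cocycles are free, coboundaries vanish) and $H^1 = 0$ otherwise (the cocycle and coboundary spaces are each one-dimensional and must coincide since $d^1 \circ d^0 = 0$). The hyperbolicity hypothesis enters precisely here: if $a^\mu = b^\mu = 1$ for some nonzero $\mu$, then $\lambda^\mu = a^{p\mu} b^{q\mu} = 1$, contradicting the fact that $\lambda$ is not a root of unity. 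Hence only the weight-zero piece of each $R_{2j}$ contributes, giving $\dim H^1(\pi_1(T^2); R_{2j}) = 2$, and summing over $j$ yields $2(n-1)$.

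I do not expect a serious obstacle; the only subtle point is verifying the vanishing of $H^1$ in the case when exactly one of $a^\mu$, $b^\mu$ equals $1$, which amounts to a short linear-algebra computation on the $\mathbb{Z}^2$ cochain complex.
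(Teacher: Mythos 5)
Your proof is correct, but it takes a genuinely different route from the paper. The paper argues geometrically at the level of cocycles: since $r_n(\rho(\gamma))$ is regular semisimple (diagonal with $n$ distinct eigenvalues), every class in $H^1$ can be represented, after subtracting a coboundary, by a cocycle taking diagonal values on $\gamma$, and commutativity of $\pi_1(T^2)$ then forces the cocycle to be diagonal-valued on the other generator as well; this yields $2(n-1)$ as two copies of the $(n-1)$-dimensional space of traceless diagonal matrices. You instead decompose the coefficients as $\mathfrak{sl}(n)\cong\oplus_{j=1}^{n-1}R_{2j}$ (the paper's Lemma \ref{lem:decomposition}, which is really a statement about $\SL(2)$-modules and so applies here), split each $R_{2j}$ into weight lines for the simultaneously diagonalized image, and compute $H^1(\mathbb{Z}^2;V_\mu)$ from the two-generator cochain complex. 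Your version is longer but more self-contained and explicit: it identifies precisely which subspace contributes (the zero-weight line of each $R_{2j}$, i.e.\ the Cartan subalgebra), handles all weight cases by elementary linear algebra, and makes completely transparent where hyperbolicity of $\rho(\gamma)$ enters (ruling out $a^\mu=b^\mu=1$ for $\mu\neq 0$, since $\lambda=a^pb^q$ is not a root of unity). The paper's version is shorter and dovetails with its later use of the fact that the diagonal representation is a smooth point of $R(\pi_1(T^2),\SL(n))$, but it is terser about why "up to coboundary" one may assume $z(\gamma)$ is diagonal; your weight-space argument supplies exactly the justification that step needs.
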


\begin{proof}
	Suppose $\gamma\in \mathbb{Z} \times \mathbb{Z}$ such that $\rho(\gamma)$ is a hyperbolic element
	in $\SL(2)$. Then, up to conjugation,
	\begin{equation*}
		\rho(\gamma) = \begin{pmatrix} a & 0 \\ 0 & a^{-1}\end{pmatrix},
	\end{equation*}
	for some $|a|>1$.
	The image of such an element under the irreducible representation $r_n:\SL(2)
	\rightarrow \SL(n)$ is conjugate to a diagonal matrix
	with $n$ distinct eigenvalues. Hence any nearby representation $\rho':
	\mathbb{Z} \times \mathbb{Z}\rightarrow \SL(n)$  is conjugate to a diagonal matrix with distinct
	entries. In other words, up to coboundary, we can assume that any class
	$[z] \in H^1(\mathbb{Z} \times \mathbb{Z};\mathfrak{sl}(n)_{r_n\circ\rho})$ has the form of a diagonal matrix
	$z(\gamma) = \text{diag}(y_1,y_2,\dots,y_n)$ where $\text{tr}~z(\gamma) = 0$.
	Since for any other $\gamma' \in \mathbb{Z} \times \mathbb{Z}$,
	we have that $\gamma'$ commutes with $\gamma$, $z(\gamma')$ must also be diagonal,
	so the dimension of $H^1(\mathbb{Z} \times \mathbb{Z};\mathfrak{sl}(n)_{r_n\circ\rho})$ is
	$2(n-1)$.
\end{proof}

\begin{lem}\label{lem:injectivity}
	Let $\rho:\pi_1(M) \rightarrow \SL(2)$ be a non-abelian
	representation such that $\rho(\Delta_i)$ contains a hyperbolic element
	for each subgroup $\Delta_i$ of $\pi_1(M)$
	associated to a boundary component
	$T_i$ of $\partial M$. If $\dim H^1(
	\Gamma;\mathfrak{sl}(2)_{r_n\circ\rho}) = k(n-1)$ where $k$ is the number
	of components of $\partial M$, then $\iota^*:H^2(M;\mathfrak{sl}(n)_{
	r_n\circ\rho}) \rightarrow H^2(\partial M;\mathfrak{sl}(n)_{r_n\circ\rho})$
	is injective.
\end{lem}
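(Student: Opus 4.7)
The approach is to combine the long exact sequence in cohomology of the pair $(M,\partial M)$ with twisted coefficients $V := \mathfrak{sl}(n)_{r_n\circ\rho}$, Poincar\'e--Lefschetz duality, and a boundary dimension count. The trace form on $\mathfrak{sl}(n)$ is nondegenerate and $\mathrm{Ad}$-invariant, so $V$ is self-dual as a $\Gamma$-module, and Poincar\'e--Lefschetz duality yields $\dim H^i(M,\partial M;V) = \dim H^{3-i}(M;V)$. Set $a_i := \dim H^i(M;V)$ and $b_i := \dim H^i(\partial M;V)$.

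First I would compute the boundary cohomology torus by torus. For each $T_i \subset \partial M$, the hypothesis that $\rho(\pi_1(T_i))$ contains a hyperbolic element means that its $r_n\circ\rho$-image is diagonal with $n$ distinct eigenvalues, so the abelian group $\pi_1(T_i)$ maps into the Cartan torus of $\SL(n)$. Hence the $\mathrm{Ad}$-invariant subspace of $\mathfrak{sl}(n)$ is exactly the Cartan subalgebra, giving $\dim H^0(T_i;V) = n-1$. Poincar\'e duality on each torus (using self-duality of $V$) gives $\dim H^2(T_i;V) = n-1$, and Lemma~\ref{lem:torus_dimension} gives $\dim H^1(T_i;V) = 2(n-1)$. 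Summing over the $k$ boundary tori: $b_0 = b_2 = k(n-1)$ and $b_1 = 2k(n-1)$; in particular $b_0 - b_1 + b_2 = 0$.

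Next I would use $H^0(M,\partial M;V) = 0$ (dual to $H^3(M;V)$, which vanishes because $M$ has non-empty boundary) and $H^3(\partial M;V) = 0$, and write out the long exact sequence of the pair. Taking the alternating sum of dimensions and substituting $\dim H^i(M,\partial M;V) = a_{3-i}$ yields
\[
    2(a_0 - a_1 + a_2) = b_0 - b_1 + b_2 = 0,
\]
so $a_2 = a_1 - a_0$. From the tail $H^2(M,\partial M;V) \to H^2(M;V) \xrightarrow{i^*} H^2(\partial M;V) \to H^3(M,\partial M;V) \to 0$, exactness and surjectivity of the rightmost map give $\dim \mathrm{im}(i^*) = b_2 - \dim H^3(M,\partial M;V) = b_2 - a_0$, hence
\[
    \dim \ker i^* = a_2 - (b_2 - a_0) = a_0 + a_2 - b_2 = a_1 - b_2 = k(n-1) - k(n-1) = 0,
\]
using the hypothesis $a_1 = k(n-1)$.

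The only real obstacle is bookkeeping---carefully verifying the vanishings at the ends of the long exact sequence and the identification of each relative cohomology dimension with an $a_i$ via Poincar\'e--Lefschetz duality with the twisted coefficient system $V$. Once the boundary dimensions and the Euler characteristic relation $a_2 = a_1 - a_0$ are in hand, the cancellation $a_1 = k(n-1) = b_2$ makes the injectivity fall out immediately.
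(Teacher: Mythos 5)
Your proof is correct, and it takes a genuinely different route from the paper's. The paper's argument invokes the ``half lives, half dies'' lemma from Poincar\'e duality (that the restriction $\alpha : H^1(M;V) \to H^1(\partial M;V)$ has half-dimensional image, because its image is a Lagrangian subspace for the cup-product pairing), combined with $\dim H^1(M;V)=k(n-1)$ and $\dim H^1(\partial M;V)=2k(n-1)$, to conclude $\alpha$ is injective; duality then makes $\beta: H^1(\partial M;V)\to H^2(M,\partial M;V)$ surjective, and exactness yields injectivity of $i^*$. You instead compute all boundary cohomology dimensions explicitly, equate $\dim H^i(M,\partial M;V)$ with $\dim H^{3-i}(M;V)$ by Lefschetz duality alone (no Lagrangian/cup-product refinement needed), and extract $a_0-a_1+a_2=0$ from the Euler characteristic of the long exact sequence; the tail of the sequence then gives $\dim\ker i^* = a_1 - b_2 = 0$. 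Your approach is slightly more elementary, trading the symplectic ``half lives, half dies'' input for an alternating-sum bookkeeping argument, and it has the small cosmetic advantage of not requiring you to observe that $H^0(M;V)=0$ --- the $a_0$ terms cancel by themselves. The paper's route is shorter once the half-dimensional-image lemma is taken as a black box, and makes the role of the hypothesis $\dim H^1(M;V)=k(n-1)$ visible immediately (it forces $\alpha$ to be injective), whereas in your version the hypothesis enters only at the final cancellation.
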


\begin{proof}
	We have the cohomology exact sequence for the pair $(M,\partial M)$
	\begin{equation*}
		\begin{CD}
			H^1(M,\partial M)@>>>
				H^1(M) @>\alpha>>
				H^1(\partial M) \\
				@>\beta>>
				H^2(M,\partial M) @>>>
				H^2(M) \\
				@>\iota^*>>
				H^2(\partial M) @>>>
				H^3(M,\partial M)@>>> \cdots
		\end{CD}
	\end{equation*}
	where all cohomology groups are taken to be with the twisted coefficients
	$\mathfrak{sl}(n)_{r_n\circ\rho}$.
	A standard Poincar\'{e} duality argument \cite{heusener01,hodgson98,porti97}
	implies that $\alpha$ has half-dimensional image in $H^1(\partial M)$.
	By Lemma \ref{lem:torus_dimension},
	\begin{equation*}
		\dim H^1(\Delta_i) = 2(n-1),
	\end{equation*}
	as long as $\rho(\Delta_i)$ contains a hyperbolic element for each $i$.
	We can identify $H^1(\partial M) \cong \oplus_{i=1}^k H^1(\Delta_i)$, which
	has dimension $2k(n-1)$. Since $H^1(M) \cong H^1(\Gamma)$ has
	dimension $k(n-1)$, then $\alpha$ is injective. Since $\beta$ is
	dual to $\alpha$ under Poincar\'{e} duality, then $\beta$ is surjective. This
	implies that $\iota^*$ is injective.
\end{proof}

We now utilize the previous facts to determine sufficient conditions for deforming
representations.

\begin{prop}\label{prop:dim_smooth}
	Let $\rho:\Gamma \rightarrow \SL(2)$ be a non-abelian
	representation such that
	$\rho(\Delta_i)$ contains a hyperbolic element for each subgroup $\Delta_i$.
	If $H^1(\Gamma;\mathfrak{sl}(2)_{r_n\circ\rho}) = k(n-1)$ where $k$ is the number
	of components of $\partial M$, then $r_n\circ\rho$ is a smooth point of the representation
	variety $R(\Gamma,\SL(n))$, and it is contained in a unique component
	of dimension $(n+1+k)(n-1) - \dim
	H^0 (\Gamma; \mathfrak{sl}(n)_{r_n\circ\rho})$.
\end{prop}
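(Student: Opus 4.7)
The plan is to use the obstruction calculus from the unnamed proposition immediately above together with the injectivity in Lemma \ref{lem:injectivity} to show that $r_n\circ\rho$ is a smooth point of $R(\Gamma,\SL(n))$, and then to read off the component dimension from the size of the cocycle space $Z^1$.

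First I would compute $\dim Z^1(\Gamma;\mathfrak{sl}(n)_{r_n\circ\rho})$. Since $B^1(\Gamma;\mathfrak{sl}(n)_{r_n\circ\rho})\cong \mathfrak{sl}(n)/H^0(\Gamma;\mathfrak{sl}(n)_{r_n\circ\rho})$, the hypothesis that $\dim H^1=k(n-1)$ yields
\[
\dim Z^1 = \dim H^1 + \dim B^1 = k(n-1) + (n^2-1) - \dim H^0 = (n+1+k)(n-1) - \dim H^0.
\]
Weil's observation puts the Zariski tangent space to $R(\Gamma,\SL(n))$ at $r_n\circ\rho$ inside $Z^1$, with equality at smooth points, so everything reduces to verifying smoothness.

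Smoothness I would establish by obstruction theory. Given a $j$-th order formal deformation $\rho^j$, the associated obstruction $\zeta_{j+1}\in H^2(\Gamma;\mathfrak{sl}(n)_{r_n\circ\rho})$ is natural under restriction to the boundary, so $i^*(\zeta_{j+1})$ is the obstruction for the restricted deformation $i^*\rho^j$ on $\partial M$. By Lemma \ref{lem:injectivity}, $i^*\colon H^2(\Gamma;\mathfrak{sl}(n)_{r_n\circ\rho})\to H^2(\partial M;\mathfrak{sl}(n)_{r_n\circ\rho})$ is injective, so it suffices to kill the boundary obstructions. On each torus $T_i$ the image $r_n\circ\rho(\pi_1(T_i))$ is abelian and, as in the proof of Lemma \ref{lem:torus_dimension}, contains an element with $n$ distinct eigenvalues, so it is conjugate into the diagonal subgroup of $\SL(n)$. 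The locus of simultaneously diagonalizable commuting pairs forms a smooth open subvariety of $R(\pi_1(T_i),\SL(n))$ near such a representation, so $r_n\circ\rho|_{T_i}$ is a smooth point and every formal deformation on $T_i$ is unobstructed. Hence $i^*(\zeta_{j+1})=0$ and by injectivity $\zeta_{j+1}=0$.

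Inducting on $j$, every obstruction vanishes, so $r_n\circ\rho$ is a smooth point of $R(\Gamma,\SL(n))$ of local dimension $\dim Z^1 = (n+1+k)(n-1)-\dim H^0$, and therefore lies on a unique irreducible component of that dimension. The main obstacle in this plan is the boundary smoothness step: one really needs the hyperbolic-element hypothesis to guarantee that the commuting variety $R(\pi_1(T^2),\SL(n))$ is smooth at $r_n\circ\rho|_{T_i}$, since at representations whose image lacks a regular element the scheme of commuting pairs can fail to be smooth and the naturality/injectivity pipeline would no longer deliver the vanishing of $\zeta_{j+1}$.
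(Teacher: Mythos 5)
Your proposal is correct and follows essentially the same route as the paper: kill the obstruction $\zeta_{j+1}$ by combining smoothness of the restricted representations on the boundary tori (via the hyperbolic/regular element, as in Lemma \ref{lem:torus_dimension}) with the injectivity of $i^*$ from Lemma \ref{lem:injectivity}, then read off the dimension from $\dim Z^1 = \dim H^1 + \dim B^1$. The only step you elide is the passage from the resulting formal deformations to convergent ones (the paper invokes \cite[Proposition 3.6]{heusener01}, an Artin-approximation argument), which is needed before ``all obstructions vanish'' actually yields smoothness of $R(\Gamma,\SL(n))$ at $r_n\circ\rho$.
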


\begin{proof}
	We begin by showing that every cocyle in $Z^1(\Gamma;
	\mathfrak{sl}(n)_{r_n\circ\rho})$ is integrable.
	
	Suppose we have $u_1,\dots,u_j : \Gamma \rightarrow
	\mathfrak{sl}(n)$ such that 
	\begin{equation*}
		\rho_n^j(\gamma) = \exp(\sum_{i=1}^j t^i u_i(\gamma))\rho(\gamma)
	\end{equation*}
	is a homomorphism modulo $t^{j+1}$. By Lemma \ref{lem:torus_dimension}
	and \cite{richardson79}, the restriction
	of $\rho_n$ to $\Delta_i$ is a smooth point of the representation variety
	$R(\Delta_i,\SL(n))$. Hence $\rho_n^j|_{\pi_1(T_i)}$ extends
	to a formal deformation of
	order $j+1$ by the formal implicit function theorem (see \cite{heusener01},
	Lemma 3.7). This implies that the restriction of $\zeta_{j+1}^{(u_1,\dots,u_j)}$
	to each component $H^2(T_i) < H^2(\partial M)$ vanishes.
	
	As $H^2(\partial M) = \oplus_{i=1}^k H^2(T_i)$,
	hence, $\iota^* \zeta_{j+1}^{(u_1,\dots,u_j)} = \zeta_{j+1}^{(\iota^*u_1,\dots,\iota^*u_j)} = 0$.
	The injectivity of $\iota^*$ follows from Lemma \ref{lem:injectivity}
	and implies that $\zeta_{j+1}^{(u_1,\dots,u_j)} = 0$. Hence,
	the homomorphism can be extended to a deformation
	$(r_n\circ\rho)^{j+1}$ of order $j+1$,  and inductively to a formal deformation
	$(r_n\circ\rho)^\infty$.
	
	Applying \cite[Proposition 3.6]{heusener01} to the formal deformation
	$(r_n\circ\rho)^\infty$ results in a convergent deformation. Hence, $r_n\circ\rho$ is a smooth
	point of the representation variety. 
	
	As in \cite{heusener16}, we note that the exactness of
	\begin{equation*}
		1 \rightarrow H^0(\Gamma;\mathfrak{sl}(n)_{r_n\circ\rho})
			\rightarrow \mathfrak{sl}(n)_{r_n\circ\rho}
			\rightarrow B^1(\Gamma;\mathfrak{sl}(n)_{r_n\circ\rho})
	\end{equation*}
	implies that
	\begin{equation*}
		\dim B^1(\Gamma;\mathfrak{sl}(n)_{r_n\circ\rho}) =
		n^2-1 - \dim H^0(\Gamma;\mathfrak{sl}(n)_{r_n\circ\rho}).
	\end{equation*}
	Thus, we conclude that the local dimension of $R(\Gamma,\SL(n))$
	is
	\begin{equation*}
		\dim Z^1(\Gamma;\mathfrak{sl}(n)_{r_n\circ\rho}) = 
			(n+1+k)(n-1)-\dim H^0(\Gamma;\mathfrak{sl}(n)_{r_n\circ\rho}).
	\end{equation*}
	That it is in a unique component follows from \cite[Lemma 2.6]{heusener01}.
\end{proof}

\section{Deforming $\rho_{\lambda,n}$}\label{sec:results}

We will now show that $\rho_{\lambda,n}$ satisfies the conditions in Proposition
\ref{prop:dim_smooth} so that $\rho_{\lambda,n}$ can be deformed within a
neighborhood of representations. This will entail a computation of the
dimension of the cohomology group $H^1(\Gamma_\phi;
\mathfrak{sl}(n)_{\rho_{\lambda,n}})$. By the decomposition in Lemma
\ref{lem:decomposition}, the cohomology group $H^1(
\Gamma_\phi;\mathfrak{sl}(n)_{\rho_{\lambda,n}})$ is a direct sum
\begin{equation*}
	H^1(\Gamma_\phi;\mathfrak{sl}(n)_{\rho_{\lambda,n}}) \cong
		\oplus_{j=1}^{n-1} H^1(\Gamma_\phi;R_{2j}),
\end{equation*}
so it suffices to compute the dimensions of $H^1(\Gamma_\phi;R_{2j})$, for
$1 \leq j \leq n-1$.

To simplify the computations which follow, we give a presentation of
$\Gamma_\phi$ with an additional generator $\gamma_{2g+p}$. We will choose
$\gamma_{1},\dots,\gamma_{2g}$ to be standard generators of the fundamental
group for the closed surface $S_g$, and $\gamma_{2g+1},\dots,\gamma_{2g+p}$ to
be curves around the $p$ punctures of $S$. Then $\pi_1(\Gamma_\phi)$ has a
presentation of the form:
\begin{equation*}
	 \langle\gamma_1,\dots,\gamma_{2g+p},\tau | \tau\gamma_i\tau^{-1} = \phi(\gamma_i),
		\Pi_{i=1}^g [\gamma_{2i-1},\gamma_{2i}] = \Pi_{s=1}^p \gamma_{2g+s}\rangle.
\end{equation*}
With these generators for $\pi_1(S)$, $\phi^*:H^1(S)\rightarrow H^1(S)$
can be written as a block matrix
\begin{equation*}
	\begin{pmatrix}[\bar\phi^*] & [*]\\ 0 & [P]\end{pmatrix}
\end{equation*}
where $\bar\phi^*:H^1(\bar{S}) \rightarrow H^1(\bar{S})$ is the induced map on
the first cohomology of the closed surface $\bar{S}$ obtained by filling in the
$p$ punctures of $S$, and $P=(p_{ij})$ is a permutation matrix denoting the
permutation of the punctures of $S$ under $\phi$. In particular, $p_{jk_j}=1$
if and only if $\tau\delta_j\tau^{-1}$ is conjugate to $\delta_{k_j}$, and
$p_{jk_j}=0$ otherwise. The matrix $\bar{\phi}^*$ is a symplectic matrix
preserving the intersection form $\omega$ on $\bar{S}$. The eigenvalues of
$P$ are roots of unity, with 1 occurring as an eigenvalue once for each cycle
in the permutation.

We now compute the cohomological dimension of $H^1(\Gamma_\phi;R_{2j})$.
The argument uses similar ideas to \cite[Theorem 4.1]{kozai16} using the
generators $X^{\ell-1}Y^{2j-\ell}$, $\ell=0,..,2j$, of $R_{2j}$ and is
equivalent up to a coordinate change when $j=1$.

\begin{prop}\label{prop:dimension}
	Let $\phi:S \rightarrow S$ be a homeomorphism, with $\lambda^2$ a simple
	eigenvalue of $\phi^*$. Suppose also that $|\lambda| \neq 1$, 
	$\bar\phi^*: H^1(\bar{S}) \rightarrow H^1(\bar{S})$ does not have 1 as an
	eigenvalue, and for each $2 \leq j \leq n-1$, we have that
	$\lambda^{2j}$ is not an eigenvalue of $\phi^*$. Then for each $j$, $1 \leq j \leq n-1$,
	$\dim H^1(\Gamma_\phi; R_{2j}) = k$ where $k$ is the number of components of
	$\partial M_\phi$.
\end{prop}

\begin{proof}
	Let $z \in Z^1(\Gamma_\phi,R_{2j})$.
	Then $z$ is determined by its values on $\gamma_1$, $\dots$, $\gamma_{2g+p}$,
	and $\tau$, subject to the cocycle condition $\eqref{eqn:cocycle}$ imposed by
	the relations in $\Gamma_\phi$. These can be computed via the Fox calculus
	\cite[Chapter 3]{lubotzky85}.
	Differentiating the relations
	\begin{align*}
		\tau \gamma_i \tau^{-1} &= \phi(\gamma_i),
	\end{align*}
	yields
	\begin{align}
		\frac{\partial [ \phi(\gamma_i) \tau \gamma_i^{-1} \tau^{-1}]}{\partial \gamma_i}
			&= \frac{\partial\phi(\gamma_i)}{\partial \gamma_i}
				- \phi(\gamma_i)\tau\gamma_i^{-1}
				= \frac{\partial\phi(\gamma_i)}{\partial \gamma_i} - \tau\label{eqn:derivatives}\\
		\frac{\partial [ \phi(\gamma_i) \tau \gamma_i^{-1} \tau^{-1}]}{\partial \gamma_h}
			&= \frac{\partial\phi(\gamma_i)}{\partial \gamma_h}, i \neq h\notag\\
		\frac{\partial [ \phi(\gamma_i) \tau \gamma_i^{-1} \tau^{-1}]}{\partial \tau}
			&= \phi(\gamma_i) - \phi(\gamma_i)\tau\gamma_i^{-1}\tau^{-1}
				= \phi(\gamma_i) - 1.\notag
	\end{align}
	A cocycle $z$ then must satisfy the set of equations for $1 \leq i \leq 2g+p$ of
	the form
	\begin{equation}
		\sum_{h=1}^{2g+p}\frac{\partial [ \phi(\gamma_i) \tau \gamma_i^{-1} \tau^{-1}]}{
			\partial \gamma_h} \cdot z(\gamma_h)
		+ \frac{\partial [ \phi(\gamma_i) \tau \gamma_i^{-1} \tau^{-1}]}{
			\partial \tau} \cdot z(\tau) = 0.\label{eqn:kernel}
	\end{equation}
	
	With respect to the basis $X^0Y^{2j}$, $X^1Y^{2j-1}$, \dots, $X^{2j}Y^0$
	for $R_{2j}$, the values $z(\gamma_i)$ can be expressed in coordinates
	$(x_{i,\ell})$, where $x_{i,\ell}$ is the coefficient of $X^\ell Y^{2j-\ell}$
	for $z(\gamma_i)$. We similarly express $z(\tau)$ in the coordinates 
	$x_{0,\ell}$, $0\leq \ell \leq 2j$ with $x_{0,\ell}$ being the $X^\ell
	Y^{2j-\ell}$ coefficient of $z(\tau)$. Direct calculation shows that
	\begin{align}
		\rho(\gamma_i) \cdot X^\ell Y^{2j-\ell} &=(X-a_iY)^\ell Y^{2j-\ell}
			\label{eqn:action} \\
			&= \sum_{m=0}^{\ell}(-a_i)^m\binom{\ell}{m}X^{\ell-m}Y^{2j-\ell+m},\notag \\
		\rho(\tau) \cdot X^{\ell}Y^{2j-\ell} &=(\lambda^{-1}X)^\ell
			(\lambda Y)^{2j-\ell}\notag \\
			&= \lambda^{2j-2\ell}X^\ell Y^{2j-\ell}. \notag 
	\end{align}
	
	The set of coboundaries can be computed from Equation \eqref{eqn:coboundary}
	as the set of cocycles $z'$ satisfying,
	\begin{align*}
		z'(\gamma_i) &= \sum_{\ell=0}^{2j} b_{\ell}X^\ell Y^{2j-\ell}
			-b_{\ell}(X-a_iY)^\ell Y^{2j-\ell}\\
			&=\sum_{\ell=0}^{2j} \sum_{m=1}^\ell -b_{\ell}(-a_i)^m \binom{\ell}{m}
			X^{\ell-m}Y^{2j-\ell+m},\\
		z'(\tau) &= \sum_{\ell=0}^{2j}(b_{\ell}-\lambda^{2j-2\ell}b_{\ell})
			X^\ell Y^{2j-\ell},
	\end{align*}
	where $b_0,\dots,b_{2j} \in \mathbb{C}$ parametrize the set
	$B^1(\Gamma_\phi,R_{2j})$ of coboundaries. In particular, adding the
	appropriate coboundary $z'$ to $z$, we can assume $x_{0,\ell}=0$ for
	$\ell \neq j$, so that $z(\tau)$ has the form
	\begin{equation*}
		z(\tau) = x_{0,j}X^jY^j.
	\end{equation*}
	
	Then $z$ is determined by a vector
	\begin{equation*}
		\vec{v}=(x_{1,0},\dots,x_{2g+p,0},\dots,x_{0,j},x_{1,j},\dots,x_{2g+p,j},\dots,x_{1,2j},\dots,x_{2g+p,2j})^T
	\end{equation*}
	in the kernel of a block matrix $A=\begin{pmatrix}A_{\alpha,\beta}\end{pmatrix}$
	where the entries in the $i$-th row of $A_{\alpha,\beta}$ are the coefficients
	of the terms $x_{*,\beta}X^\alpha Y^{2j-\alpha}$ in Equation \eqref{eqn:kernel}.
	Since the image under $\rho$ of any word $w$ in $\{\gamma_i,
	\gamma_i^{-1}\}_{i=1}^{2g+p}$
	has the form
	\begin{equation*}
		\rho(w)  = \begin{pmatrix} 1 & a \\ 0 & 1 \end{pmatrix}
	\end{equation*}
	for some $a \in \mathbb{C}$, then the previous calculations in Equations
	\eqref{eqn:action} imply that $A_{\alpha,\beta}=\mathbf{0}$ for $\beta<\alpha$.
	Moreover, when $\alpha \neq j$, $A_{\alpha,\alpha}$ is a square matrix, and we
	note that the coefficient of $X^\alpha Y^{2j-\alpha}$ in $\rho(\gamma_i) \cdot
	X^\alpha Y^{2j-\alpha}$ is 1, so that in Equation \eqref{eqn:kernel}, the
	coefficient of $x_{h,\alpha}$ in the $X^\alpha Y^{2j-\alpha}$ term is
	the signed number of times that $\gamma_h$ appears in the word $\phi(\gamma_i)$.
	In addition, Equation \eqref{eqn:kernel} will contain a single $-\tau \cdot
	z(\gamma_i)$ term, so that $A_{\alpha,\alpha}=\phi^* - \lambda^{2j-2\alpha} I$
	when $\alpha \neq j$. We also see that
	\begin{equation*}
		A_{j,j} = \begin{pmatrix} \begin{matrix} 0 \\ \vdots \\ 0 \end{matrix} &
			\begin{bmatrix} \\ \phi^* - I \\ \\  \end{bmatrix}
			\end{pmatrix}.
	\end{equation*}
	Since $z(\tau)=x_{0,j}X^jY^j$, direct calculation also shows that for some matrix
	$K$,
	\begin{equation*}
		A_{j-1,j}=\left(\begin{array}{c|c}
			\begin{matrix} -j\lambda^2 a_1  \\ \vdots \\
			-j\lambda^2 a_{2g+p} \end{matrix}  & 
			K \end{array}\right).
	\end{equation*}

	As $\lambda^2$ is a simple eigenvalue, $\bar\phi^*$ is symplectic, and the
	eigenvalues of $P$ are roots of unity,
	$\phi^*-\lambda^2 I$ and $\phi^*-\lambda^{-2} I$ have
	1 dimensional kernel. Furthermore, since 1 is not an eigenvalue of $\bar\phi^*$,
	$\phi^*-I$ has kernel whose dimension is
	equal to the number of disjoint cycles of the permutation of the punctures.
	This is equal to the number of components of $\partial M_\phi$. In addition,
	since $\lambda^{2j-2\alpha}$ is not an eigenvalue of $\phi^*$ for $\alpha \neq
	j-1, j, 1$, the kernel of $A_{\alpha,\alpha}$ is trivial in these cases.
	Hence, the kernel of $A$ has dimension at most $2+k+1$, where 
	\begin{equation*}
		k=\text{\# of components of }\Sigma=\text{\# of components of }\partial M_\phi.
	\end{equation*}
	The additional dimension comes from the possible contribution to the kernel from
	the first column of $A_{(j-1),j}$. Consider the submatrix
	\begin{align*}
		U &=\left( \begin{array}{c|c} A_{j-1,j-1} & A_{j-1,j}\\
		\hline \mathbf{0} & A_{j,j} \end{array} \right)\\
		&= \left(\begin{array}{c|c|c}
			\phi^*-\lambda^2 I & \begin{array}{c}
					-j \lambda^2 a_1 \\ \vdots \\-j \lambda^2 a_{2g+n}\end{array}
				& K \\ \hline
			\mathbf{0} & \begin{array}{c} 0 \\ \vdots \\0 \end{array}
				& \phi^*-I
		\end{array}\right).
	\end{align*}
	If $\text{null}(A) > 2+k$, then we must have that $\text{null}(U) > k+1$.
	
	Since $\lambda^2$ is a simple eigenvalue of $\phi^*$ and
	$(a_1,\dots,a_{2g+p})^T$ is an eigenvector of the $\lambda^2$ eigenspace,
	$(a_1,\dots,a_{2g+p})^T$ is not in the image of $\phi^* - \lambda^2 I$. Hence,
	for any $x=(x_{1,j},\dots,x_{2g+p,j})^T$ in the kernel of $\phi^* - I$, there
	is a unique $x_{0,j}$ such that $Kx-x_{0,j}(a_1,\dots,a_{2g+p})^T$ is in the
	image of $\phi^*-\lambda^2 I$. Therefore, $\text{null}(U) = k+1$.
	
	Hence $\text{null}(A) = 2+k$. However, the solution arising from
	the kernel of $\phi^*-\lambda^2 I$ is the eigenvector
	\begin{equation*}
		(0,\dots, 0, x_{1,j}, \dots, x_{2g+p,j}, 0, \dots, 0)^T
		= (0, \dots, 0, a_1,\dots,a_{2g+p},0,\dots,0)^T
	\end{equation*}
	which	is a coboundary. So we have that $\dim H^1(\Gamma_\phi; R_{2j})
	\leq k+1$. Finally, there is one further redundancy since 
	\begin{equation*}
		\Pi_{i=1}^g [\gamma_{2i-1},\gamma_{2i}]=\Pi_{s=1}^p \gamma_{2g+s}.
	\end{equation*}
	From the $\phi^*-I$ in $A_{j,j}$, we can see that $x_{j,2g+1},\dots,x_{j,2g+p}$
	can be freely chosen as long as $x_{j,2g+s}=x_{j,2g+t}$ whenever
	$\gamma_{2g+s}$ and $\gamma_{2g+t}$ are in the same cycle of $P$.
	Since $|\lambda|\neq 1$, for any eigenvector of $\phi^*$, $a_{2g+1}=\cdots=
	a_{2g+p}=0$, so the $X^jY^j$ coefficient of $z(\Pi_{s=1}^n \gamma_{2g+s})$ can be
	chosen to be any quantity
	\begin{equation}
		x_{j,2g+1} + \dots + x_{j,2g+p}.\label{eqn:j-sum}
	\end{equation}
	The relation $\Pi_{i=1}^g [\gamma_{2i},\gamma_{2i+1}]=\Pi_{s=1}^p \gamma_{2g+s}$
	relates the sum in Equation \eqref{eqn:j-sum} to the
	$X^jY^j$ coefficient of $\Pi_{i=1}^g [\gamma_{2i},\gamma_{2i+1}]$, which has
	no dependence on $x_{j,2g+s}$, for $1\leq s \leq p$.
	This imposes a 1-dimensional relation on the space of cocycles, and we conclude
	that
	\begin{equation*}
		\dim H^1(\Gamma_\phi, R_{2j})= k.
	\end{equation*}
\end{proof}

We now prove Theorem \ref{thm:smooth} and Theorem \ref{thm:pseudoanosov}.

\begin{proof}[Proof of Theorem \ref{thm:smooth}]
	By Lemma \ref{lem:decomposition}, $\mathfrak{sl}(n)$ is the direct sum
	of $R_{2j}$, $j=1,\dots,n-1$. The conditions on the eigenvalues of $\phi^*$ and
	Proposition \ref{prop:dimension} imply that for each $j$,
	$\dim H^1(\Gamma_\phi;R_{2j}) = k$. Hence
	$\dim H^1(\Gamma_\phi,\mathfrak{sl}(n)_{\rho_{\lambda,n}}) = k(n-1)$.
	By Proposition \ref{prop:dim_smooth}, this implies smoothness of
	$R(\Gamma_\phi,\SL(n))$ at $\rho_{\lambda,n}$. Since $\rho_{\lambda,n}$
	is non-abelian, it has trivial infinitesimal centralizer, so $H^0(\Gamma_\phi;R_2) = 0$,
	so that the local dimension is $(n+1+k)(n-1)$.
\end{proof}

We obtain the special case in Theorem \ref{thm:pseudoanosov} when $\lambda^2$
is the dilatation of a pseudo-Anosov
map $\phi$. 

\begin{proof}[Proof of Theorem \ref{thm:pseudoanosov}]
	When the stable and unstable foliations of $\phi$ are orientable,
	it is a well-known fact that the dilatation is a simple eigenvalue and the
	largest eigenvalue of $\phi^*$ (see \cite{fathi79}, \cite{mcmullen03},
	\cite{penner91}). Hence, $\phi$ satisfies the conditions of
	Theorem \ref{thm:smooth}.
	
	From \cite{kozai16}, we know that there are hyperbolic deformations of
	$\rho_\lambda=\rho_{\lambda,2}$, which are irreducible representations
	since they correspond to hyperbolic structures. The composition of these
	deformations with
	the irreducible representation $r_n$ then provides nearby deformations
	of $\rho_{\lambda,n}$	which are also irreducible.
\end{proof}

\section{Description of Deformations}

Recall that the action of $\Gamma_\phi$ on $\mathfrak{sl}(n)$ is given by composing
$\rho_{\lambda,n}$ with the adjoint representation. That is, for $\gamma \in
\Gamma_\phi$ and $c \in \mathfrak{sl}(n)$,
\begin{equation*}
	\gamma \cdot c = \Ad{\rho_{\lambda,n}(\gamma)}(c)=
		\rho_{\lambda,n}(\gamma) ~c~ \rho_{\lambda,n}(\gamma)^{-1}.
\end{equation*}
Let $E_{j}$ denote the $j$-th standard basis vector for $\mathbb{C}^n$. Then every
element of $\mathfrak{sl}(n)$ is a linear combination of the matrices
$E_{j,\ell} = E_j\cdot E_\ell^T$. In order to obtain a useful description of the
action of $\Gamma_\phi$ on $\mathfrak{sl}(n)$, it suffices to compute the action
of $\gamma$ on $E_{j,\ell}$ for a set of generators of $\Gamma_\phi$. By direct
calculation,
\begin{align}
	\gamma_i \cdot E_{j,\ell} &= r_n \left( \begin{bmatrix} 1 & a_i \\ 0 & 1
		\end{bmatrix}\right) E_j\cdot E_\ell^T r_n
		\left( \begin{bmatrix} 1 & a_i \\ 0 & 1 \end{bmatrix}\right)^{-1}
		\label{eqn:slaction} \\
		&= \begin{pmatrix} (-a_i)^{j-1} \binom{j-1}{\ell-1}\\
			(-a_i)^{j-2} \binom{j-1}{\ell-2} \\
			\vdots \\
			(-a_i)^0 \binom{j-1}{0} \\
			0 \\ \vdots \\ 0 \end{pmatrix}
			\begin{pmatrix} 0 \\ \vdots \\ 0 \\ 1 \\ (a_i)^1 \binom{\ell}{1} \\
				(a_i)^2\binom{\ell+1}{2} \\ \vdots \\
				(a_i)^{n-\ell} \binom{n-1}{n-\ell} \end{pmatrix}^T
				\notag \\
	\tau \cdot E_{j,\ell} & = r_n \left( \begin{bmatrix} \lambda & 0 \\
		0 & \lambda^{-1} \end{bmatrix} \right) E_j \cdot E_\ell^T r_n \left(
			\begin{bmatrix} \lambda & 0 \\ 0 & \lambda^{-1} \end{bmatrix} \right)^{-1}\notag \\
		&= \lambda^{2(n-j+1)} E_j \cdot E_\ell^T \lambda^{-2(n-\ell+1)} \notag\\
		&= \lambda^{2(\ell-j)}E_{j,\ell}.\notag 
\end{align}
Notably, the actions of
$\Gamma_\phi$ on the $(j,\ell)$-coordinates of $\mathfrak{sl}(n)$ have no
contributions to all rows $>j$ and all columns $<\ell$. Applying analogous
calculations as in the proof of Proposition \ref{prop:dimension} to the
relations in $\Gamma_\phi$, we find that if $z:\Gamma_\phi \rightarrow
\mathfrak{sl}(n)_{\rho_{\lambda,n}}$ is a cocycle and $z_{j,\ell}(\gamma_i)$ is
the $(j,\ell)$-coordinate of $z(\gamma_i)$, then the vector
\begin{equation*}
	\vec{v}_{n,1} = \begin{pmatrix} z_{n,1}(\gamma_1) \\ \vdots \\
		z_{n,1}(\gamma_{2g+p}) \end{pmatrix} = (z_{n,1}(\gamma_i))
\end{equation*}
is a solution to $(\phi^* - \lambda^{-2(n-1)}I)\vec{v}_{n,1}=\mathbf{0}$. Since
$\lambda^{-2(n-1)}$ is not an eigenvalue of $\phi^*$, it follows that
$\vec{v}_{n,1}=\mathbf{0}$.

Since $\vec{v}_{n,1}=\mathbf{0}$, when the relations in
$\Gamma_\phi$ applied to $z$ are restricted to the $(n-1,1)$-coordinate and the
$(n,2)$-coordinate, we obtain that $\vec{v}_{n-1,1} = (z_{n-1,1}(\gamma_i))$ and
$\vec{v}_{n,2} = (z_{n,2}(\gamma_i))$ are solutions to $(\phi^*-\lambda^{-2(n-2)}I)
\vec{v} =\mathbf{0}$. A straightforward induction combined with
Equations \eqref{eqn:slaction} then shows that $z_{j,\ell}(\gamma_i)=0$
for all $j>\ell+1$  while $\vec{v}_{j,\ell}=(z_{j,\ell}(\gamma_i))$ is a
$\lambda^{-2}$-eigenvector of $\phi^*$ when $j=\ell+1$, i.e. the subdiagonal
entries of $z(\gamma_i) \in \mathfrak{sl}(n)$ are coordinates from eigenvectors
of $\phi^*$, and all other entries below the diagonal are $0$. This provides
$n-1$ generators of cocycles. The others come from the 1-eigenspaces of
$\phi^*$ when applying the cocycle conditions to the diagonal entries of
$z(\gamma_i)$.

We have that $\mathfrak{sl}(n)$ can be associated with the tangent space to
$\SL(n)$ at the identity, and multiplying $z(\gamma_i)$ by
$\rho_{\lambda,n}(\gamma_i)$ gives the derivative at $\rho_{\lambda,n}(\gamma_i)$.
The previous calculations then imply that if $\rho_t:\Gamma_\phi
\rightarrow \SL(n)$ is a path of representations such that $\rho_0 =
\rho_{\lambda,n}$, then the subdiagonal entries
of $\rho_t'(\gamma_i)$ at $t=0$ are equal to the subdiagonal entries of
$z(\gamma_i)$. Hence, for each $j,\ell$, there exists at least one $i$ for which
$z_{j,\ell}(\gamma_i) \neq 0$.

Note that in the case that $\lambda^2$ is the dilatation of a pseudo-Anosov
map $\phi$ as in Theorem \ref{thm:pseudoanosov}, the subdiagonal entries of
the irreducible representations
obtained by deforming $\rho_\lambda$ in $\SL(2)$ and composing with $r_n$ to
obtain a deformation of $\rho_{\lambda,n}$ necessarily
satisfy certain relations. In particular, the first derivatives of the
subdiagonal entries would have to be fixed multiples of entries of the
$\lambda^{-2}$-eigenvector determined by the irreducible representation
$r_n$. As described above, the deformations in
$\SL(n)$ allow the derivatives to be freely chosen multiples of the $n-1$
generators, so there are deformations which are not from deformations of
$\rho_\lambda$ that are composed with $r_n$. Since the set of irreducible
representations is an open subset of the space of $R(\pi_1(M_\phi),\SL(n))$
(see, for example, \cite[Lemma 1.4.2]{culler83} \cite[Proposition 27]{sikora12}),
this also implies there are nearby irreducible representations which are
not from composing deformations of $\rho_\lambda$ with $r_n$.

\section{Example}

 

The genus 2 example $\phi:S_{2,2} \rightarrow S_{2,2}$ from
\cite{kozai16}, obtained from 
taking the left Dehn twists $T_{\beta_1}, T_{\beta_2},T_\gamma$, followed by the
right Dehn twists $T_{\alpha_1}^{-1}, T_{\alpha_2}^{-1}$, satisfies the hypotheses
of Theorem \ref{thm:pseudoanosov}. Each component
of $S_2 \setminus \{\alpha_1,\beta_1,\alpha_2,\beta_2,\gamma\}$ contains one of
the two punctures.
The map on cohomology $\bar{\phi}^*$ has two simple eigenvalues
$\lambda_1^2=\frac{5+\sqrt{21}}{2}$ and $\lambda_{2}^2 = \frac{3+\sqrt{5}}{2}$, along
with their reciprocals $\lambda_1^{-2}$ and $\lambda_2^{-2}$. The reducible
representations $\rho_{\lambda_i,n}$ are smooth points of 
$R(\Gamma_\phi,\SL(n))$, each on a component of dimension $(n+3)(n-1)$.
There is a two-dimensional family of irreducible representations in
$X(\Gamma_\phi,\SL(n))$, which is the image of a two-dimensional family
of irreducible representations in $X(\Gamma_\phi,SL(2))$ under $r_n$,
limiting to $\rho_{\lambda_1,n}$.

 \begin{figure}
	\begin{center}
		\includegraphics{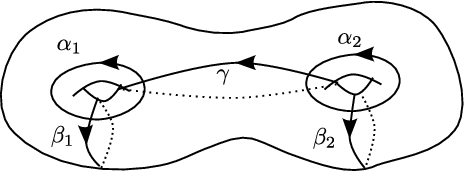}
		\caption{The curves $\alpha_1,\alpha_2,\beta_1,\beta_2$ which form the
			basis for $H_1(S)$, and $\gamma$.}
		\label{fig:genus2}
	\end{center}
\end{figure}

\bibliographystyle{amsplain}
\bibliography{sources1}

\end{document}